\def\cH{{\mathcal H}}
\def\nn{\nonumber}
\def\a{\alpha}  \def\d{\delta} 
\def\e{\varepsilon} \def\f{\phi}   
  \def\k{\kappa}
\newtheorem{theorem}{Theorem}
\newtheorem{lemma}[theorem]{Lemma}
\newtheorem{claim}{Claim}
\newtheorem{definition}{Definition}
\newcommand{\bfrac}[2]{\left(\frac{#1}{#2}\right)}
\newcommand{\set}[1]{\left\{#1\right\}}
\def\sm{\setminus}
\def\E{\mathbb{E}}
\def\Pr{\mathbb{P}}
\newcommand{\ignore}[1]{}
\def\cH{{\mathcal H}}
\newcommand{\beq}[2]{\begin{equation}\label{#1}#2\end{equation}}
\newcommand{\mults}[1]{\begin{multline*}#1\end{multline*}}
\def\nn{\nonumber}
\newcommand{\brac}[1]{\left(#1\right)}
\newcommand{\bin}{\textrm{Bin}}
\renewcommand{\P}{\mathbb{P}}
\def\E{\mathbb{E}}
\newcommand{\rbrac}[1]{\left(#1\right)}
\newcommand{\sbrac}[1]{\left[ #1\right]}
\newcommand{\cbrac}[1]{\left\{ #1\right\}}
\newcommand{\abrac}[1]{\left| #1\right|}
\newcommand{\mc}[1]{\mathcal{#1}}
\newcommand{\tbf}[1]{\textbf{#1}}
\newcounter{rot}
\begin{document}
\title{On the chromatic number of random regular hypergraphs}
\author{Patrick Bennett\thanks{Research supported in part by Simons Foundation Grant \#426894.}\\Department of Mathematics,\\ Western Michigan University\\ Kalamazoo MI 49008\and Alan Frieze\thanks{Research supported in part by NSF Grant DMS1661063 }\\ Department of Mathematical Sciences,\\ Carnegie Mellon University,\\ Pittsburgh PA 15213. }
\date{}

\maketitle

\begin{abstract}
    We estimate the likely values of the chromatic and independence numbers of the random $r$-uniform $d$-regular hypergraph on $n$ vertices for fixed $r$, large fixed $d$, and $n \rightarrow \infty$. 
\end{abstract}

\section{Introduction}
The study of the chromatic number of random graphs has a long history. It begins with the work of Bollob\'as and Erd\H{o}s \cite{BE} and Grimmett and McDiarmid \cite{GM} who determined $\chi(G_{n,p})$, $p$ constant to within a factor 2, w.h.p.~Matula \cite{M} reduced this to a factor of 3/2. Then we have the discovery of martingale concentration inequalities by Shamir and Spencer \cite{SS} leading to the breakthrough by Bollob\'as \cite{B} who determined $\chi(G_{n,p})$ asymptotically for $p$ constant.

The case of $p\to 0$ proved a little more tricky, but {\L}uczak \cite{L1} using ideas from Frieze \cite{F} and \cite{M} determined $\chi(G_{n,p}),p=c/n$ asymptotically for large $c$. {\L}uczak \cite{L2} showed that w.h.p.~$\chi(G_{n,p}),p=c/n$ took one of two values. It was then that the surprising power of the second moment method was unleashed by Achlioptas and Naor \cite{AN}. Since then there has been much work tightening our estimates for the $k$-colorability threshold, $k\ge 3$ constant. See for example Coja-Oghlan \cite{C}. 

Random regular graphs of low degree were studied algorithmically by several authors e.g. Achlioptas and Molloy \cite{AM} and by Shi and Wormald \cite{SW}. Frieze and {\L}uczak \cite{FL92} introduced a way of using our knowledge of $\chi(G_{n,p}),p=c/n$ to tackle $\chi(G_{n,r})$ where $G_{n,r}$ denotes a random $r$-regular graph and where $p=r/n$. Subsequently Achlioptas and Moore \cite{AM} showed via the second moment method that w.h.p.~$\chi(G_{n,r})$ was one of 3 values. This was tightened basically to one value by Coja-Oghlan, Efthymiou and Hetterich \cite{CE}.

For random hypergraphs, Krivelevich and Sudakov \cite{KS98} established the asymptotic chromatic number for $\chi(\cH_r(n,p)$ for $\binom{n-1}{r-1}p$ sufficiently large. Here $\cH_r(n,p)$ is the binomial $r$-uniform hypergraph where each of the $\binom{n}{r}$ possible edges is included with probability $p$. There are several possibilities of a proper coloring of the vertices of a hypergraph. Here we concentrate on the case where a vertex coloring is proper if no edge contains vertices of all the same color. Dyer, Frieze and Greehill \cite{DFG} and Ayre, Coja-Oghlan and Greehill \cite{ACG} established showed that w.h.p.~$\chi(\cH_r(n,p)$ took one or two values. When it comes to what ew denote by $\chi(\cH_r(n,d)$, a random $d$-regular, $r$-uniform hypergraph, we are not aware of any results at all. In this paper we extend the approach of \cite{FL92} to this case:
\begin{theorem}\label{thm:main}
For all fixed $r$ and $\e>0$ there exists $d_0=d_0(r, \e)$ such that for any fixed $d \ge d_0$ we have that w.h.p. 
\begin{equation}
  \abrac{ \frac{ \chi(\mc{H}_r(n, d)) - \rbrac{\frac{(r-1)d}{r \log d}}^{\frac{1}{r-1}}}{\rbrac{\frac{(r-1)d}{r \log d}}^{\frac{1}{r-1}}}} \le \e, \qquad  \abrac{ \frac{ \a(\mc{H}_r(n, d)) - \rbrac{\frac{r \log d}{(r-1)d}}^{\frac{1}{r-1}}n}{\rbrac{\frac{r \log d}{(r-1)d}}^{\frac{1}{r-1}}n}} \le \e
\end{equation}
Here $\a$ refers to the independence number of a hypergraph.
\end{theorem}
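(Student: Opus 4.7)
Let $\a^* := \rbrac{\frac{r \log d}{(r-1)d}}^{1/(r-1)}n$ and $\chi^* := \rbrac{\frac{(r-1)d}{r\log d}}^{1/(r-1)}$, so $\a^*\chi^* = n$. The plan is to prove the two upper bounds $\a(\cH_r(n,d)) \le (1+\e)\a^*$ and $\chi(\cH_r(n,d)) \le (1+\e)\chi^*$ w.h.p.\ The matching lower bounds then follow automatically: $\chi \ge n/\a$ combined with the first gives $\chi \ge (1-O(\e))\chi^*$, and the pigeonhole $\a \ge n/\chi$ combined with the second gives $\a \ge (1-O(\e))\a^*$.

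The upper bound on $\a$ is a direct first-moment computation in the pairing (configuration) model, in which $\cH_r(n,d)$ arises by partitioning $nd$ stubs uniformly at random into $nd/r$ blocks of size $r$ (conditioned on simplicity, which for fixed $r,d$ costs only a constant factor in probability). For a fixed $k$-subset $S \subseteq V$, the number of hyperedges lying inside $S$ is well-approximated by $\mathrm{Po}(\l)$ with $\l \sim (nd/r)(k/n)^r$. Substituting $k = (1+\e)\a^*$ yields $\l \sim (1+\e)^r \a^* \log d / (r-1)$ while $\log\binom{n}{k} \sim (1+\e)\a^* \log d / (r-1)$. Because $(1+\e)^r > 1+\e$ for $r \ge 2$ and $\e>0$, the expected number of independent $k$-sets is $o(1)$, and Markov closes the argument.

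The upper bound on $\chi$ follows the Frieze--{\L}uczak strategy of iteratively stripping off large independent sets. While the residual hypergraph has $n_i \gg \chi^*$ vertices, find an independent set of size $(1-\e)\rbrac{\frac{r\log d}{(r-1)d}}^{1/(r-1)} n_i$, colour it with a fresh colour, and delete it; once fewer than $\chi^*$ vertices remain, colour each leftover individually, adding $o(\chi^*)$ colours to the total. This uses $(1+O(\e))\chi^*$ colours overall. The technical heart is to show that w.h.p.\ the residual hypergraph at each step still admits an independent set of the required size. My intended route is a \emph{resilience lemma}: w.h.p., every induced subhypergraph of $\cH_r(n,d)$ on $m$ vertices (for $m$ ranging throughout the trajectory of the algorithm) contains an independent set of size $(1-\e)\rbrac{\frac{r\log d}{(r-1)d}}^{1/(r-1)} m$. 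The proof would be a first-moment bound over pairs $(T,S)$ where $T = V\sm W$ is the discarded set and $S \subseteq W$ is a candidate independent set; the combinatorial cost $\binom{n}{|T|}$ has to be absorbed by the slack coming from $(1-\e)^r < 1-\e$, which for fixed $r$ and $\e$ leaves a strictly positive exponent in the relevant range of $m$.

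The chief obstacle will be the resilience lemma. Obtaining the first-moment bound uniformly over all residual sizes $m$ and all choices of the discarded set $T$ is delicate, because as $m/n$ shrinks, the slack between $(1-\e)^r(m/n)^{r-1}$ and $1-\e$ must continue to dominate the entropy $\log \binom{n}{|T|}$ together with the Stirling-type logarithmic factors. The constants have to be chosen in the correct order — first $\e$, then $d_0$ large in terms of $\e$ and $r$ — and one may have to handle intermediate ranges of $m$ separately. A secondary, routine task is transferring the pairing-model statements to the simple hypergraph $\cH_r(n,d)$ via the standard Poisson approximation for loops and multi-edges at fixed $r,d$.
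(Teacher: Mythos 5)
Your overall architecture (prove the two upper bounds, deduce the two lower bounds from $\chi\ge n/\a$ and the pigeonhole $\a\ge n/\chi$) matches the paper, and your first-moment bound for $\a(\cH_r(n,d))\le(1+\e)\a^*$ identifies the right threshold comparison $(1+\e)^{r-1}>1$. The paper does the same thing, but the step you wave through --- that in the configuration model the probability a $k$-set is independent is $e^{-(1+o(1))\l}$ with $\l=(nd/r)(k/n)^r$ --- is where all the work lives; the paper outsources it to a rigorous configuration-model computation (Claim 4.2 of Bal--Bennett, with its implicit parameters $z_1,z_2$) and then spends a page verifying the resulting analytic condition. That part of your plan is sound in outline.

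The genuine gap is in the chromatic number upper bound. First, a first moment over pairs $(T,S)$ with $S$ independent can only \emph{upper}-bound the number of such pairs; to show every $W=V\sm T$ \emph{contains} a large independent set you need a lower bound holding simultaneously for all $W$, i.e.\ a concentration inequality for $\a(H[W])$ together with a union bound over the $\binom{n}{|T|}$ choices of $T$. Second, and fatally, that union bound cannot be won for fixed $d$: for $|W|=\gamma n$ with $\gamma$ bounded away from $0$ and $1$ the entropy cost is $e^{\Theta(n)}$, while the deviation you need, $t=\e\cdot\Theta\rbrac{n\,d^{-1/(r-1)+o(1)}}$, yields (via Azuma over the $dn$ point placements, or any comparable concentration) a failure probability only of order $e^{-O(t^2/(dn))}=e^{-o_d(1)\cdot n}$. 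The slack from $(1-\e)^r<1-\e$ lives at the scale $e^{-\Theta_\e(n d^{-1/(r-1)+o(1)})}$ and cannot absorb $\binom{n}{|T|}$. This is precisely the obstruction that led Frieze and \L uczak to abandon the ``repeatedly strip near-optimal independent sets'' strategy for random \emph{regular} (i.e.\ bounded-degree) models, and the paper follows their alternative: generate $\cH_r(n,d)$ by surgery on the configuration model starting from a hypergraph with $m=(d-d^{1/2}\log d)n/r$ random edges, invoke the Krivelevich--Sudakov theorem for $\cH_r(n,p)$ as a black box to color it with $(1+\e/2)\chi^*$ colors, show that the surgery (deleting edges at overfull buckets and adding $O(d^{1/2}\log d\cdot n)$ points to underfull ones) creates only $O(d^{-1/2}n)$ monochromatic edges, and repair those with $\e\chi^*/2$ fresh colors via a sparseness/degeneracy estimate. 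A further minor slip: stopping your greedy when fewer than $\chi^*$ vertices remain and coloring them individually adds up to $\chi^*$ extra colors, not $o(\chi^*)$, which by itself already doubles the bound.
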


\section{Preliminaries}

\subsection{Tools}
We will be using the following forms of Chernoff's bound (see, e.g., \cite{FK}). 

\begin{lemma}[Chernoff bound]
Let $X\sim \bin(n,p)$. Then for all $0<\lambda<np$
\begin{equation}\label{Chernoff}
\Pr(|X-np| \ge \lambda ) \le 2\exp\rbrac{-\frac{ \lambda^2}{3np}}.
\end{equation}
\end{lemma}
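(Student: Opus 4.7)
The plan is the standard Chernoff/exponential-moment argument applied separately to the upper and lower tails, combined by a union bound. Writing $X = X_1 + \cdots + X_n$ as a sum of independent $\mathrm{Bernoulli}(p)$ variables, I would first handle the upper tail by fixing $t > 0$ and applying Markov's inequality to $e^{tX}$, yielding $\Pr(X \ge np+\lambda) \le e^{-t(np+\lambda)} \E[e^{tX}]$. By independence the moment generating function factors as $\E[e^{tX}] = (1-p+pe^t)^n$, and the elementary inequality $1+u \le e^u$ gives the clean bound $\E[e^{tX}] \le \exp(np(e^t-1))$.

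Next I would choose the optimal $t = \ln(1+\lambda/(np))$. Substituting yields the tight Cram\'er-type estimate
\[
\Pr(X \ge np+\lambda) \le \exp\bigl(-np \cdot \varphi(\lambda/(np))\bigr), \qquad \varphi(x) := (1+x)\ln(1+x) - x.
\]
The only computation specific to this lemma is the elementary inequality $\varphi(x) \ge x^2/3$ on $[0,1]$, which one verifies by checking that $\varphi(x) - x^2/3$ vanishes together with its first derivative at $x = 0$ and has nonnegative second derivative on $[0,1]$. With $x = \lambda/(np) \in (0,1)$ (exactly the range assumed in the lemma), this produces $\Pr(X \ge np+\lambda) \le \exp(-\lambda^2/(3np))$.

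For the lower tail I would run the symmetric argument: for $s > 0$, $\Pr(X \le np-\lambda) \le e^{s(np-\lambda)} \E[e^{-sX}] \le \exp(s(np-\lambda) + np(e^{-s}-1))$, optimized at $s = -\ln(1-\lambda/(np))$, yielding $\Pr(X \le np-\lambda) \le \exp(-np \cdot \psi(\lambda/(np)))$ with $\psi(x) := (1-x)\ln(1-x) + x$; and $\psi(x) \ge x^2/2 \ge x^2/3$ on $[0,1]$ by the same convexity trick. Adding the two one-sided bounds via the union bound yields the factor of $2$ in the statement. The argument is entirely routine; the only very mild obstacle is choosing forms of the inequalities on $\varphi$ and $\psi$ that produce the single constant $1/3$ covering both tails.
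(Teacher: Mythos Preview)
The paper does not prove this lemma; it simply states it as a standard tool with a reference to \cite{FK}. Your exponential-moment argument is the standard proof and is essentially correct.

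There is one small slip in your justification of the upper-tail inequality $\varphi(x)\ge x^2/3$ on $[0,1]$: the second derivative of $g(x):=\varphi(x)-x^2/3$ is $g''(x)=\frac{1}{1+x}-\frac{2}{3}$, which is \emph{negative} for $x>1/2$, so your ``nonnegative second derivative on $[0,1]$'' claim fails. The inequality itself is still true: since $g'''(x)=-\frac{1}{(1+x)^2}<0$, the derivative $g'(x)=\ln(1+x)-\frac{2x}{3}$ is concave on $[0,1]$, and with $g'(0)=0$ and $g'(1)=\ln 2-\tfrac{2}{3}>0$ concavity gives $g'(x)\ge x\,g'(1)\ge 0$; hence $g$ is nondecreasing and $g(x)\ge g(0)=0$. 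With this fix (or by quoting the standard bound $\varphi(x)\ge \frac{x^2}{2(1+x/3)}\ge \frac{3x^2}{8}\ge \frac{x^2}{3}$ for $0\le x\le 1$), your proof goes through.
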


\begin{lemma}[McDiarmid's inequality]
Let $X=f(\Vec{Z})$ where $\Vec{Z}= (Z_1, \ldots Z_t)$ and the $Z_i$ are independent random variables. Assume the function $f$ has the property that whenever $\vec{z}, \vec{w}$ differ in only one coordinate we have $|f(\vec{z}) - f(\vec{w})| \le c$. Then for all $\lambda>0$ we have
\begin{equation}\label{Mcdiarmid}
\Pr(|X-\E[X]| \ge \lambda ) \le 2\exp\rbrac{-\frac{ \lambda^2}{2c^2 t}}.
\end{equation}
\end{lemma}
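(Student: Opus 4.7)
The plan is to prove McDiarmid's inequality by the standard Doob-martingale and Azuma--Hoeffding approach, applied to the filtration generated by exposing the coordinates $Z_i$ one at a time.

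First, set $\cF_i := \sigma(Z_1, \ldots, Z_i)$ with $\cF_0$ trivial, and let $X_i := \E[X \mid \cF_i]$ for $i = 0, 1, \ldots, t$. Then $\{X_i\}$ is a Doob martingale with $X_0 = \E[X]$ and $X_t = X$. Writing $D_i := X_i - X_{i-1}$, the tower property gives $\E[D_i \mid \cF_{i-1}] = 0$, and $X - \E[X] = \sum_{i=1}^t D_i$.

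The next and only substantive step is to show that $|D_i| \le c$ almost surely. Condition on $Z_1 = z_1, \ldots, Z_{i-1} = z_{i-1}$ and define
$$g(z) := \E\sbrac{f(z_1, \ldots, z_{i-1}, z, Z_{i+1}, \ldots, Z_t)}.$$
Then $X_i = g(Z_i)$ and $X_{i-1} = \E[g(Z_i)]$ under this conditioning. Because $Z_{i+1}, \ldots, Z_t$ are independent of $Z_i$, I can couple them to be identical under any two realizations $z, z'$ of $Z_i$; the bounded-differences hypothesis, applied pointwise to the coupled samples, then gives $|g(z) - g(z')| \le c$. Hence the conditional range of $D_i$, and in particular $|D_i|$, is at most $c$.

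The final step is the standard exponential-moment argument. Hoeffding's lemma states that any random variable $Y$ with $\E[Y] = 0$ and $|Y| \le c$ satisfies $\E[e^{\theta Y}] \le e^{\theta^2 c^2 / 2}$. Applying this conditionally to $D_i$ given $\cF_{i-1}$ and iterating from $i = t$ down to $i = 1$ (peeling one conditional expectation at a time) yields $\E[e^{\theta(X - \E X)}] \le e^{t\theta^2 c^2 / 2}$. Markov's inequality then gives $\Pr(X - \E X \ge \lambda) \le e^{-\theta \lambda + t \theta^2 c^2/2}$ for every $\theta > 0$, and optimizing with $\theta = \lambda / (t c^2)$ produces the one-sided bound $\exp(-\lambda^2/(2 c^2 t))$. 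Applying the same argument to $-f$ (which also has the bounded-differences property with constant $c$) and combining via a union bound yields the two-sided inequality as stated.

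The only delicate point in this plan is the bounded-increment step, where the coupling must be set up carefully so that the bounded-differences hypothesis can be invoked inside a conditional expectation; once $|D_i| \le c$ is in hand, the remainder is textbook subgaussian calculus requiring nothing beyond Hoeffding's lemma and Markov's inequality.
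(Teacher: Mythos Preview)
Your proof is correct and follows the standard Doob-martingale/Azuma--Hoeffding route. The paper does not actually supply a proof of this lemma: it is listed under ``Tools'' as a known inequality (implicitly referred to \cite{FK}) and used as a black box, so there is no paper-proof to compare against. One minor remark: since you show that $D_i$ lies in an interval of length $c$ (not merely $|D_i|\le c$), Hoeffding's lemma in fact gives $\E[e^{\theta D_i}\mid\cF_{i-1}]\le e^{\theta^2 c^2/8}$ and hence the sharper bound $2\exp(-2\lambda^2/(c^2 t))$; your weaker version still yields exactly the inequality as stated in the paper, so nothing is lost.
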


 Bal and the first author \cite{BB21} showed the following.

\begin{theorem}[Claim 4.2 in \cite{BB21}]\label{thm:BB}
Fix $r \ge 3$, $d \ge 2$, and $0 < c < \frac{r-1}{r}.$ Let $z_2$ be the unique positive number such that 
\begin{equation}\label{p1}
\frac{z_2  \sbrac{\rbrac{z_2 + 1}^{r-1} - z_2^{r-1}}}{\rbrac{z_2 + 1}^r - z_2^r} = c
\end{equation}
and let
\beq{p2}{
z_1 = \frac{d}{r\sbrac{\rbrac{z_2 + 1}^r - z_2^r}}.
}
Let $h(x) = x \log x$. If it is the case that   
\begin{equation}\label{eqn:BB}
    h\rbrac{\frac{d}{r}}  + h(d c) + h(d(1-c)) - h(c) -h(1-c) - h(d)  - \frac{d}{r} \log z_1 - dc \log z_2 <0
\end{equation}

then w.h.p.~$\alpha(\mc{H}_r(n, d))< cn$.
\end{theorem}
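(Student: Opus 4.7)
The plan is a first moment calculation executed in the pairing (configuration) model of the $d$-regular $r$-uniform hypergraph on $n$ vertices: each vertex carries $d$ half-edges and the $nd$ half-edges are partitioned uniformly into $m=nd/r$ unordered blocks of size $r$, each block becoming an edge. For fixed $d$ and $r$, the pairing is simple with probability bounded below by a positive constant, so w.h.p.\ statements transfer to $\mc{H}_r(n,d)$.

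Let $X$ count the independent sets of size $s=cn$ in the pairing model and fix such a set $S$. Classify each edge by the number $k$ of its vertices lying in $S$ and let $e_k$ be the number of edges of type $k$. Then $\sum_{k=0}^{r-1}e_k=m$, $\sum_{k=0}^{r-1}ke_k=cnd$, and $S$ is independent iff no edge is of type $r$. Direct counting of the pairings compatible with a given tuple $(e_k)_{k<r}$ gives
$$\E[X] = \binom{n}{s}\cdot\frac{m!\,(r!)^m}{(nd)!}\sum_{(e_k)}\frac{(cnd)!\,((1-c)nd)!}{\prod_{k=0}^{r-1} e_k!\,(k!)^{e_k}\,((r-k)!)^{e_k}}.$$

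I would evaluate this by Stirling's approximation combined with the Laplace method. Writing $e_k=n\eta_k$ and introducing Lagrange multipliers $\log z_1,\log z_2$ for the two linear constraints, the stationarity equations yield $\eta_k=z_1 z_2^k\binom{r}{k}$ for $k=0,\ldots,r-1$. Using $\sum_{k=0}^{r-1}\binom{r}{k}z_2^k=(z_2+1)^r-z_2^r$, the constraint $\sum \eta_k = d/r$ reproduces (\ref{p2}), and dividing $\sum k\eta_k=cd$ by $\sum \eta_k$, together with $\sum_{k=0}^{r-1}k\binom{r}{k}z_2^k=rz_2[(z_2+1)^{r-1}-z_2^{r-1}]$, reproduces (\ref{p1}). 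Substituting the saddle back and using the identity $\sum \eta_k\log\eta_k=(d/r)\log z_1+cd\log z_2+(d/r)\log r!-\sum\eta_k\log(k!(r-k)!)$ causes the $\log r!$ and factorial terms to cancel, and a short bookkeeping of Stirling remainders shows that all $\log n$ contributions cancel as well, leaving $n^{-1}\log\E[X]$ converging to exactly the left-hand side of (\ref{eqn:BB}).

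Under the hypothesis of the theorem this limit is strictly negative, so $\E[X]\to 0$ and Markov's inequality gives $X=0$ w.h.p.\ in the pairing model, and hence w.h.p.\ in $\mc{H}_r(n,d)$. The delicate step is the Laplace argument itself: one must verify that the Hessian of the entropy functional is negative definite on the $(r-2)$-dimensional affine surface cut out by the two constraints (which follows from strict concavity of $-\sum h(\eta_k)$ on the positive simplex) and that boundary tuples in which some $\eta_k\to 0$ are exponentially subdominant throughout the range $c\in(0,(r-1)/r)$. Given this, the passage from the discrete sum to the continuous optimum costs only a polynomial factor, which is absorbed into the $\exp(-\Omega(n))$ estimate.
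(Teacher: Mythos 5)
This statement is not proved in the paper at all: it is imported verbatim as Claim 4.2 of \cite{BB21}, so there is no internal proof to compare against. Judged on its own terms, your first-moment calculation in the configuration model is the right argument and, as far as I can check, the bookkeeping is correct: the counting formula for $\E[X]$ is the standard one (partition the $cnd$ points of $S$ into $e_k$ groups of size $k$, the $(1-c)nd$ outside points into matching groups of size $r-k$, and match them up, which produces exactly the $\prod_k e_k!\,(k!)^{e_k}((r-k)!)^{e_k}$ denominator); the Lagrange conditions give $\eta_k = z_1\binom{r}{k}z_2^k$, the constraints $\sum\eta_k = d/r$ and $\sum k\eta_k = cd$ recover \eqref{p2} and \eqref{p1} via the stated binomial identities; and substituting back, the $\log n$, $\log r!$, and $\log(k!(r-k)!)$ contributions all cancel, leaving precisely the left side of \eqref{eqn:BB} as $\lim n^{-1}\log\E[X]$. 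Two small remarks. First, you do not actually need the Laplace method or any Hessian analysis: for an upper bound it suffices to bound the sum by (number of profiles $(e_k)$) $\times$ (largest term), and the number of profiles is $O(n^{r})$; strict concavity of $-\sum h(\eta_k)$ on the constraint polytope already guarantees that the interior stationary point is the global maximizer, so boundary profiles need no separate treatment. Second, the transfer to $\mc{H}_r(n,d)$ via $\Pr[\text{simple}]=\Omega(1)$ should be justified for hypergraph configurations, but for fixed $r\ge 3$ and $d$ this is routine (multi-edges vanish in expectation and loops are Poisson-bounded, as the paper itself computes in Subsection \ref{sec:UBchi}). So the proposal is a correct, essentially self-contained proof of the cited claim, in the same spirit as the source it replaces.
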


Krivelevich and Sudakov \cite{KS98} proved the following.

\begin{theorem}[Theorem 5.1 in \cite{KS98}]\label{thm:KS}
For all fixed $r$ and $\e>0$ there exists $d_0=d_0(r, \e)$ such that whenever $D=D(p):=\binom{n-1}{r-1}p \ge d_0$ we have that
\begin{equation*}
  \abrac{ \frac{ \chi(\mc{H}_r(n, p)) - \rbrac{\frac{(r-1)D}{r \log D}}^{\frac{1}{r-1}}}{\rbrac{\frac{(r-1)D}{r \log D}}^{\frac{1}{r-1}}}} \le \e, \qquad \qquad     \abrac{ \frac{ \a(\mc{H}_r(n, p)) - \rbrac{\frac{r \log D}{(r-1)D}}^{\frac{1}{r-1}}n}{\rbrac{\frac{r \log D}{(r-1)D}}^{\frac{1}{r-1}}n}} \le \e
\end{equation*}
with probability at least $1-o(1/n)$.
\end{theorem}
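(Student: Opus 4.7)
The plan is to adapt the classical first-moment/second-moment plus greedy-colouring paradigm (Bollob\'as, Matula, \L{}uczak) for $\chi(G_{n,p})$ to $r$-uniform hypergraphs. Write $\a^* := \rbrac{\frac{r \log D}{(r-1) D}}^{1/(r-1)} n$ and $\chi^* := n/\a^*$. The argument decomposes into (A) a first-moment upper bound on $\a$; (B) a second-moment lower bound on $\a$ for a single vertex set, boosted to a $1 - o(1/n)$ tail by vertex-exposure concentration; (C) a uniform lower bound on $\a$ of every linear-sized induced subhypergraph via a union bound; and (D) a greedy colouring driven by (C).

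For (A), let $X_k$ count independent $k$-sets, so $\E X_k = \binom{n}{k}(1-p)^{\binom{k}{r}}$. Evaluating at $k_+ := \lceil (1+\e)\a^* \rceil$ using $\log(1-p) \le -p$ and $\binom{k_+}{r} = (1+o(1)) k_+^r/r!$ gives $\log \E X_{k_+} = -\Omega(\e k_+ \log D)$, so $\Pr(\a \ge k_+) = o(1/n)$; combined with $\chi \ge n/\a$ this yields the lower bound on $\chi$. For (B), set $k_- := \lfloor (1-\e)\a^* \rfloor$. The standard hypergraph second-moment calculation---split $\E X_{k_-}^2$ by the intersection size $\ell$ of two independent $k_-$-sets, with $\ell = O(1)$ dominant---yields $\E X_{k_-}^2/(\E X_{k_-})^2 = 1 + o(1)$, so the median of $\a$ is at least $k_-$. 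Since $\a$ is $1$-Lipschitz under vertex exposure, Azuma--Hoeffding gives $\Pr(\a \le (1-2\e)\a^*) \le \exp(-\Omega(\e^2(\a^*)^2/n))$, which is $o(1/n)$ whenever $(\a^*)^2/n \gg \log n$ (essentially the full range of the theorem).

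For (C), let $m_0 := n/(\log D)^3$; for each $V' \seq V$ with $|V'| = m \ge m_0$ the induced hypergraph $\cH_r(n,p)[V']$ is distributed as $\cH_r(m,p)$, so (B) applied at parameters $(m,p)$ gives failure probability at most $\exp(-\Omega(\e^2 (\a^*_m)^2/m))$, where $D_m := \binom{m-1}{r-1}p$ and $\a^*_m := \rbrac{\frac{r\log D_m}{(r-1)D_m}}^{1/(r-1)} m$. Union-bounding over the $\binom{n}{m} \le \exp(m \log(en/m))$ choices of $V'$ succeeds because $m \ge m_0$ forces $\log(en/m) = O(\log\log D)$, absorbed by the concentration exponent. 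For (D), colour greedily: while the uncoloured set $V_t$ has $|V_t| \ge m_0$, assign a fresh colour to a maximum independent set in $\cH_r(n,p)[V_t]$, of size at least $(1-2\e)\a^*_{|V_t|}$ by (C); colour the final $<m_0$ vertices trivially. Solving the recurrence $|V_{t+1}| = |V_t|\bigl(1 - (1-2\e)(r\log D_{|V_t|}/((r-1)D_{|V_t|}))^{1/(r-1)}\bigr)$ shows the number of colours used is at most $(1+O(\e))\chi^*$, with the leftover contributing only $m_0 = o(\chi^*)$.

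The main obstacle is step (B): boosting the second-moment $1-o(1)$ bound to a $1 - o(1/n)$ tail, since this tail is what makes the union bound in (C) viable. Vertex-exposure concentration suffices for the bulk of the regime; the extreme case where $D$ is polynomially close to $n^{r-1}$ requires either Talagrand's inequality (with certification via the $\binom{k_-}{r}$ non-edges that witness an independent set) or Janson's inequality for decreasing events, and doing this uniformly in $D$ is where most of the Krivelevich--Sudakov technical effort lies.
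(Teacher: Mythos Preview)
This statement is not proved in the present paper at all: it is Theorem~5.1 of Krivelevich--Sudakov~\cite{KS98}, quoted here as a black box and then applied (via the usual $\mc{H}_r(n,p)$-to-$\mc{H}_r(n,M)$ transfer) inside the proof of Claim~\ref{clm:hnm}. So there is no ``paper's own proof'' to compare your proposal against; the paper simply invokes the result.

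That said, your sketch does track the Krivelevich--Sudakov argument faithfully in outline: first-moment upper bound on $\a$, second-moment existence of a large independent set, a uniform statement for all linear-sized induced subhypergraphs, and then the Bollob\'as/\L{}uczak greedy extraction of colour classes. You also correctly flag the genuine sticking point, namely boosting the second-moment estimate in~(B) to failure probability $o(1/n)$ so that the union bound in~(C) goes through. Your proposed fix via vertex-exposure Azuma works only when $(\a^*)^2/n \gg \log n$, i.e.\ essentially when $D = n^{o(1)}$; once $D$ is a positive power of $n$ this exponent is far too small, and Talagrand with certificates of size $\binom{k_-}{r}$ does not obviously beat $\binom{n}{m}$ either. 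In~\cite{KS98} this is handled not by a generic concentration inequality but by a direct counting argument tailored to the hypergraph setting, and that is indeed where the work is. Your proposal is therefore an accurate high-level plan but not yet a proof: step~(B)$\to$(C) is identified as the obstacle rather than resolved.
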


\section{Proof}

In this section we prove Theorem \ref{thm:main}. First we give an overview. We show in Subsection \ref{sec:UBalpha} that the upper bound on $\a$ follows from Theorem \ref{thm:BB} and some straightforward calculations. Then the lower bound on $\chi$ follows as well. Thus we will be done once we prove the upper bound on $\chi$ (since that proves the lower bound on $\a$). This will be in Subsection \ref{sec:UBchi}. For that we follow the methods of Frieze and \L uczak \cite{FL92}.

We will assume $r \ge 3$ since Frieze and \L uczak \cite{FL92} covered the graph case. We will use standard asymptotic notation, and we will use big-O notation to suppress any constants depending on $r$ but not $d$. Thus, for example we will write $r=O(1)$  and $d^{-1} = O(1)$ but not $d=O(1)$. This is convenient for us because even though our theorem is for fixed $d$, it requires $d$ to be sufficiently large.

\subsection{Upper bound on the independence number}\label{sec:UBalpha}

We will apply Theorem \ref{thm:BB} to show an upper bound on  $\alpha(\mc{H}_r(n, d)).$ Fix $\e, r$ (but not $d$) and let $c=c(d):= (1+\e)\rbrac{\frac{r \log d}{(r-1)d}}^{\frac{1}{r-1}}$. Let $z_2$ be as defined in \eqref{p1} and $z_1$ be as defined in \eqref{p2}. We see that 
\begin{lemma}
\[
 z_2 = \frac{c}{1-c} +O\rbrac{c^ {r}}
\]
\end{lemma}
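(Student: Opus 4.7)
The plan is to make the substitution $u = z_2/(z_2+1)$, equivalently $z_2 = u/(1-u)$, which should turn the messy rational expression in \eqref{p1} into something manageable. With this substitution we have $z_2+1 = 1/(1-u)$ and so
\[
(z_2+1)^{r-1} - z_2^{r-1} = \frac{1-u^{r-1}}{(1-u)^{r-1}}, \qquad (z_2+1)^{r} - z_2^{r} = \frac{1-u^{r}}{(1-u)^{r}}.
\]
After multiplying out and using the factorisation $1-u^k=(1-u)(1+u+\cdots+u^{k-1})$, equation \eqref{p1} simplifies to
\[
c \;=\; \frac{u(1+u+\cdots+u^{r-2})}{1+u+\cdots+u^{r-1}}.
\]

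Next I would cross-multiply and collect terms. The right-hand side numerator equals $u+u^2+\cdots+u^{r-1}$, while the denominator equals $1+u+\cdots+u^{r-1}$. Rearranging $c(1+u+\cdots+u^{r-1}) = u+u^2+\cdots+u^{r-1}$ isolates the constant term $c$ on one side and yields the clean identity
\[
\frac{c}{1-c} \;=\; u + u^2 + \cdots + u^{r-1}.
\]
Summing the geometric series and then rewriting gives $u+u^2+\cdots+u^{r-1}=\frac{u-u^r}{1-u} = z_2 - \frac{u^r}{1-u}$, so
\[
z_2 \;=\; \frac{c}{1-c} \;+\; \frac{u^r}{1-u}.
\]

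It remains to bound the error term $\frac{u^r}{1-u}$. Since every term on the right-hand side of $\frac{c}{1-c}=u+u^2+\cdots+u^{r-1}$ is non-negative, we immediately get $u\le \frac{c}{1-c}$. Because $c=(1+\varepsilon)\bigl(\tfrac{r\log d}{(r-1)d}\bigr)^{1/(r-1)}\to 0$ as $d\to\infty$ (with $r,\varepsilon$ fixed), we have $u\le \frac{c}{1-c}=c+O(c^2)=O(c)$ and $1-u \ge 1/2$ for large $d$. Therefore $\frac{u^r}{1-u} = O(u^r) = O(c^r)$, which gives the claimed expansion
\[
z_2 \;=\; \frac{c}{1-c} \;+\; O(c^r).
\]

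There is no real obstacle here; the only thing one must be a little careful about is that the substitution $u=z_2/(z_2+1)$ is a bijection between $z_2\in(0,\infty)$ and $u\in(0,1)$, so the uniqueness of a positive $z_2$ solving \eqref{p1} transfers to uniqueness of $u\in(0,1)$ solving $\frac{c}{1-c}=u+\cdots+u^{r-1}$ (the right-hand side being strictly increasing in $u$), which justifies writing $u$ as a function of $c$ with $u=O(c)$.
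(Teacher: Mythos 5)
Your proof is correct and amounts to the same argument as the paper's: your identity $z_2=\frac{c}{1-c}+\frac{u^r}{1-u}$ is exactly the paper's rearrangement $z_2-\frac{z_2^r}{(1+z_2)^{r-1}}=\frac{c}{1-c}$, since $\frac{u^r}{1-u}=\frac{z_2^r}{(1+z_2)^{r-1}}$ under your substitution $u=z_2/(z_2+1)$. You simply organize the algebra via that substitution and, helpfully, make explicit the bound $u\le\frac{c}{1-c}=O(c)$ that the paper leaves implicit in ``and the claim follows.''
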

\begin{proof}
After some algebra, we re-write \eqref{p1} as
\[
z_2-\frac{z_2^{r}}{(1+z_2)^{r-1}}=\frac{c}{1-c}.
\]
and the claim follows.
\end{proof}
Now we check \eqref{eqn:BB}.
\begin{align}
   & h\rbrac{\frac{d}{r}}  + h(d c) + h(d(1-c)) - h(c) -h(1-c) - h(d)  - \frac{d}{r} \log z_1 - dc \log z_2 \nn\\
   =& \frac{d}{r} \log \rbrac{\frac{d}{r}}  + dc \log(d c) + d(1-c) \log (d(1-c)) - c \log c -(1-c) \log (1-c) \nn\\
   & \qquad - d\log d  - \frac{d}{r} \log z_1 - dc \log z_2\nn\\
    =& dc \log \sbrac{\frac{c}{(1-c)z_2}}+ \frac dr \log \sbrac{(z_2 + 1)^r-z_2^r} + d \log (1-c) - c \log c - (1-c) \log(1-c). \label{eqn:uppersimp}
\end{align}
Now note that the first term of \eqref{eqn:uppersimp} is
\[
dc \log \sbrac{\frac{c}{(1-c)z_2}} = dc \log \sbrac{\frac{c}{(1-c)\rbrac{\frac{c}{1-c} + O\rbrac{c^{r+1}}}}} = dc \log \sbrac{\frac{1}{1 + O\rbrac{c^{r}}}} = O\rbrac{dc^{r+1}}.
\]
The second term of \eqref{eqn:uppersimp} is
\begin{align*}
 \frac dr \log \sbrac{(z_2 + 1)^r-z_2^r} &=  \frac dr \log \sbrac{\rbrac{\frac{1}{1-c} + O\rbrac{c^{r+1}}}^r-\rbrac{\frac{c}{1-c} + O\rbrac{c^{r+1}}}^r}\\
 &=\frac dr \log \sbrac{\rbrac{\frac{1}{1-c}}^r \rbrac{1-c^r + O\rbrac{c^{r+1}}}}\\
 &=\frac dr \log \rbrac{\frac{1}{1-c}}^r + \frac dr \log \rbrac{1-c^r + O\rbrac{c^{r+1}}}\\
  &= -d \log (1-c) - \frac dr c^r + O\rbrac{dc^{r+1}}.
\end{align*}
The last term of \eqref{eqn:uppersimp} is 
\[
(1-c) \log(1-c) = O(c).
\]
Therefore \eqref{eqn:uppersimp} becomes
\begin{align*}
    &-\frac dr c^r - c \log c +O\rbrac{c+ dc^{r+1}} \\
    =& -c\sbrac{\frac dr c^{r-1} +  \log c}+O\rbrac{c+ dc^{r+1}}\\
     =& -c\sbrac{\frac dr (1+\e)^{r-1} \frac{r \log d}{(r-1)d} +  \log \rbrac{(1+\e)\rbrac{\frac{r \log d}{(r-1)d}}^{\frac{1}{r-1}}}}+O\rbrac{c+ dc^{r+1}}\\
     =& -c\sbrac{ (1+\e)^{r-1} \frac{ \log d}{r-1} -\frac{ \log d}{r-1} +O(\log \log d)}+O\rbrac{c+ dc^{r+1}}\\
     =& -\Omega\rbrac{c \log d}.
\end{align*}
It follows from Theorem \ref{thm:BB} that w.h.p.
\beq{uppalpha}{
\alpha(\mc{H}_r(n, d))\leq(1+\e)\rbrac{\frac{r \log d}{(r-1)d}}^{\frac{1}{r-1}}.
}
\subsection{Upper bound on the chromatic number}\label{sec:UBchi}

Our proof of the upper bound uses the method of Frieze and \L uczak \cite{FL92}. We will generate $\mc{H}_r(n, d)$ in a somewhat complicated way. The way we generate it will allow us to use known results on $\mc{H}_r(n, p)$ due to Krivelevich and Sudakov \cite{KS98}. 

 Set
 \begin{equation}
     m:=\rbrac{\frac {d- d^{1/2} \log d}{r} } n.
 \end{equation}
 
 Let $\mc{H}^*_r(n, m)$ be an $r$-uniform multi-hypergraph with $m$ edges, where each multi-edge consists of $r$ independent uniformly random vertices chosen with replacement.  We will generate $\mc{H}^*_r(n, m)$ as follows. We have $n$ sets (``buckets'' ) $V_1, \ldots V_n$ and a set of $rm$ points $P:=\{p_1, \ldots p_{rm}\}$. We put each point $p_i$ into a uniform random bucket $V_{\f(i)}$ independently. We let $\mc{R}=\{R_1, \ldots, R_m\}$ be a uniform random partition of $P$ into sets of size $r$. Of course, the idea here is that the buckets $V_i$ represent vertices and the parts of the partition $\mc{R}$ represent edges. Thus $R_i$ defines a hyper-edge $\set{\f(j):j\in R_i}$ for $i=1,2,\ldots,m$. We denote the hypergraph defined by $\mc{R}$ by $H_{\mc{R}}$.
 
 Note that since $r \ge 3$ the expected number of pairs of multi-edges in $\mc{H}^*_r(n, m)$ is at most 
 \[
 \binom nr \binom m2 \rbrac{\frac{1}{ \binom nr}}^2 = O\rbrac{\frac{m^2}{n^r}} = O(n^{-1}).
 \]
 Thus, w.h.p.~there are no multi-edges. Now the expected number of ``loops'' (edges containing the same vertex twice) is at most
 \[
 n m \binom r2 \rbrac{\frac 1n}^2 = O(1).
 \]
 Thus w.h.p.~there are at most $\log n$ loops. We now remove all multi-edges and loops, and say that $M$ is the (random) number of edges remaining, where $m - \log n \le M \le m$. The remaining hypergraph is distributed as $\mc{H}(n, M)$, the random hypergraph with $M$ edges chosen uniformly at random without replacement.  Next we estimate the chromatic number of $\mc{H}_r(n, M)$.
 \begin{claim}\label{clm:hnm}
 W.h.p.~we have
 \begin{equation}
  \abrac{ \frac{ \chi(\mc{H}_r(n, M)) - \rbrac{\frac{(r-1)d}{r \log d}}^{\frac{1}{r-1}}}{\rbrac{\frac{(r-1)d}{r \log d}}^{\frac{1}{r-1}}}} \le \frac \e2 , \qquad \qquad     \abrac{ \frac{ \a(\mc{H}_r(n, M)) - \rbrac{\frac{r \log D}{(r-1)D}}^{\frac{1}{r-1}}n}{\rbrac{\frac{r \log D}{(r-1)D}}^{\frac{1}{r-1}}n}} \le \frac \e2 \nn.
\end{equation}
 \end{claim}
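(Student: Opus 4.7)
The plan is to reduce Claim \ref{clm:hnm} to Theorem \ref{thm:KS} by passing through the binomial hypergraph $\mc{H}_r(n,p)$ with a suitable $p$, and then to transfer back using the standard equivalence between $\mc{H}_r(n,p)$ and $\mc{H}_r(n,M_0)$.

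Fix any integer $M_0$ in the range $[m - \log n,\, m]$ and set $p := M_0 / \binom{n}{r}$, so that the expected number of edges of $\mc{H}_r(n,p)$ is exactly $M_0$. The parameter appearing in Theorem \ref{thm:KS} is then
\[
D = \binom{n-1}{r-1} p = \frac{r M_0}{n} \in \left[d - d^{1/2}\log d - \frac{r \log n}{n},\; d - d^{1/2}\log d\right],
\]
so in particular $D = d(1 + o_d(1))$. A routine Taylor expansion of $\log D$ around $\log d$ yields
\[
\rbrac{\frac{(r-1)D}{r \log D}}^{1/(r-1)} = \rbrac{1 + O\rbrac{d^{-1/2} \log d}} \rbrac{\frac{(r-1)d}{r \log d}}^{1/(r-1)},
\]
with the analogous statement for $\rbrac{r\log D / ((r-1)D)}^{1/(r-1)}$. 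For $d$ sufficiently large (in terms of $r$ and $\e$) the multiplicative error here is at most $\e/4$.

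I then apply Theorem \ref{thm:KS} to $\mc{H}_r(n,p)$ with its error parameter set to $\e/4$, obtaining its $\chi$- and $\a$-bounds with failure probability $o(1/n)$. Composed with the $\e/4$ comparison error above, this gives the bounds stated in Claim \ref{clm:hnm} (with relative error $\e/2$) for $\mc{H}_r(n,p)$, still with failure probability $o(1/n)$.

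The main step is transferring to $\mc{H}_r(n, M_0)$: conditional on $e(\mc{H}_r(n, p)) = M_0$ the distribution is exactly $\mc{H}_r(n, M_0)$, and the local central limit theorem for $\bin(\binom{n}{r}, p)$ gives $\Pr(e(\mc{H}_r(n, p)) = M_0) = \Omega(M_0^{-1/2}) = \Omega(n^{-1/2})$. Hence for any event $A$,
\[
\Pr(\mc{H}_r(n, M_0) \in A) \le \frac{\Pr(\mc{H}_r(n, p) \in A)}{\Pr(e(\mc{H}_r(n, p)) = M_0)} = O(\sqrt{n}) \cdot o(1/n) = o(n^{-1/2}).
\]
A union bound over the at most $\log n + 1$ integer values of $M_0 \in [m - \log n, m]$ then controls the randomness of $M$, yielding a total failure probability of $O(\log n / \sqrt{n}) = o(1)$, as required. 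The main obstacle is essentially bookkeeping: carefully checking the $D$-versus-$d$ comparison and verifying that the local-CLT-based transfer, combined with the union bound over $M_0$, still delivers a w.h.p.\ conclusion; once these ingredients are in place the claim follows by chaining the estimates above.
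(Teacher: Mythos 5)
Your proposal is correct and follows essentially the same route as the paper: apply Theorem \ref{thm:KS} to $\mc{H}_r(n,p)$ with error parameter $\e/4$, absorb the $D$-versus-$d$ discrepancy into another $\e/4$, and transfer to the fixed-edge-count model via the $\Omega(n^{-1/2})$ probability that the binomial edge count hits the target value. Your explicit union bound over the $O(\log n)$ possible values of $M_0$ is a slightly more careful treatment of the randomness of $M$ than the paper's one-line version, but the argument is the same.
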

 
 \begin{proof}
 We will use Theorem \ref{thm:KS} together with a standard argument for comparing $\mc{H}_r(n, p)$ with $\mc{H}_r(n, m)$. Set $p:=m / \binom nr$ and apply Theorem \ref{thm:KS} with $\e$ replaced with $\e/4$ so we get
\begin{equation}\label{eqn:clm1D}
  \abrac{ \frac{ \chi(\mc{H}_r(n, p)) - \rbrac{\frac{(r-1)D}{r \log D}}^{\frac{1}{r-1}}}{\rbrac{\frac{(r-1)D}{r \log D}}^{\frac{1}{r-1}}}} \le \frac\e4
\end{equation}
with probability at least $1-o(1/n)$. Note that here 
\[
D = \binom{n-1}{r-1}p = \binom{n-1}{r-1}m / \binom nr = rm/n = d-d^{1/2} \log d.
\]
Now since $d, D$ can be chosen to be arbitrarily large and $d=D+O(D^{1/2} \log D)$ we can replace $D$ with $d$ in \eqref{eqn:clm1D} without changing the left hand side by more than $\e / 4$ to obtain 
\begin{equation}\label{eqn:clm1d}
  \abrac{ \frac{ \chi(\mc{H}_r(n, p)) - \rbrac{\frac{(r-1)d}{r \log d}}^{\frac{1}{r-1}}}{\rbrac{\frac{(r-1)d}{r \log d}}^{\frac{1}{r-1}}}} \le \frac \e2
\end{equation}
with probability at least $1-o(1/n)$. But now note that with probability $\Omega(n^{-1/2})$ the number of edges in $\mc{H}_r(n, p)$ is precisely $M$. Thus we have that 
 \begin{equation}
  \abrac{ \frac{ \chi(\mc{H}_r(n, M)) - \rbrac{\frac{(r-1)d}{r \log d}}^{\frac{1}{r-1}}}{\rbrac{\frac{(r-1)d}{r \log d}}^{\frac{1}{r-1}}}} \le \frac \e2 \nn
\end{equation}
with probability at least $1-o(n^{-1/2})$. This proves the first inequality, and the second one follows similarly.
 \end{proof}
Now we will start to transform $\mc{H}_r(n, m)$ to the random regular hypergraph $\mc{H}_r(n, d)$. This transformation will involve first removing some edges from vertices of degree larger than $d$, and then adding some edges to vertices of degree less than $d$.  We define the \tbf{rank} of a point $p_i \in V_j$, to be the number of points $p_{i'} \in V_j$ such that $i'\le i$. We form a new set of points $P' \subseteq P$ and a partition $\mc{R}'$ of $P'$ as follows. For any $R_k \in \mc{R}$ containing a point with rank more than $d$, we delete $R_k$ from $\mc{R}$ and delete all points of $R_k$ from $P$. Note that each bucket contains at most $r$ points of $P'$. Note also that $\mc{R}' $ is a uniform random partition of $P'$. We let $\mc{H}_{\mc{R}'}$ be the natural hypergraph associated with $\mc{R}'$. 

Now we would like to put some more points into the buckets until each bucket has exactly $d$ points, arriving at some set of points $P'' \supseteq P'$. We would also like a uniform partition $\mc{R}''$ of $P''$ into sets of size $r$, and we would like $\mc{R}''$ to have many of the same parts as $\mc{R}'$. We will accomplish this by constructing a sequence $P'_1:=P'\subseteq P'_2 \subseteq \ldots  \subseteq P'_\ell=:P''$ of point sets and a sequence $\mc{R}'_1:=\mc{R}', \mc{R}'_2, \ldots, \mc{R}'_\ell=:\mc{R}''$ where $\mc{R}'_j$ is a uniform random partition of $P'_j$.

We construct $P'_{j+1}, \mc{R}'_{j+1}$ from $P'_{j}, \mc{R}'_{j}$ as follows. Suppose $|\mc{R}'_j|=a$ (in other words $\mc{R}'_j$ has $a$ parts), so $|P'_j| = ra$. $P'_{j+1}$ will simply be $P'_{j}$ plus $r$ {\bf new} points. Now we will choose a random value $K \in \{1, \ldots, r\}$ using the distribution $\P[K=k] = q_k(a)$, where $q_k(a)$ is defined as follows.
\begin{definition}
Consider a random partition of $ra+r$ points into $a+1$ parts of size $r$, and fix some set $Q$ of $r$ points. Then for $1 \le k \le r$, the number $q_k(a)$ is defined to be the probability that $Q$ meets exactly $k$ parts of the partition.
\end{definition} 
We will then remove a uniform random set of $K-1$ parts from $\mc{R}'_{j}$, leaving $Kr$ points in $P'_{j+1}$ which are not in any remaining part of $\mc{R}'_{j}$. We partition those points into $K$ parts of size $r$ such that each part contains at least one new point (each such partition being equally likely), arriving at our partition $\mc{R}'_{j+1}$.

We claim that $\mc{R}'_{j+1}$ is a uniform random partition of $P'_{j+1}$ into parts of size $r$. Indeed, first consider the $r$ new points that are in $P'_{j+1}$ which were not in $P'_{j}$. The probability that a uniform random partition of $P'_{j+1}$ would have exactly $k$ parts containing at least one new point is $q_k$. So we can generate such a random partition as follows: first choose a random value $K$ with $\P[K=k]=q_k$; next we choose a uniform random set of $(K-1)r$ points from $P'_j$; next we choose a partition of the set of points consisting of $P'_{j+1} \sm P'_{j}$ together with the points from $P'_{j}$ we chose in the last step, where the partition we choose is uniformly random from among all partitions such that each part contains at least one point of $P'_{j+1} \sm P'_{j}$; finally, we choose a uniform partition of the rest of the points. In our case this partition of the rest of the points comprises the current partition of the ``unused'' $(a-K+1)r$ points. At the end of this process we have that $\mc{H}_{\mc{R}''}$ is distributed as $\mc{H}_r(n,d)$.
\subsubsection{Bounding the number of low degree vertices in $\mc{H}_{\mc{R'}}$}
We define some sets of buckets. We show that w.h.p.~there are few small buckets i.e few vertices of low degree in the hypregraph $H_{\mc{R'}}$. Let $S_0$ be the buckets with at most $d-3d^{1/2}\log d$ points of $P'$, and let $S_1$ be the buckets with at most $d-2d^{1/2}\log d$ points of $P$. Let $S_2$ be the set of buckets that, when we remove points from $P'$ to get $P$, have at least $d^{1/2}\log d$ points removed. Then $S_0 \subseteq S_1 \cup S_2$. Our goal is to bound the probability that $S_0$ is too large. 

Fix a bucket $V_j$ and let $X \sim \bin\rbrac{rm, \frac{1}{n}}$ be the number of points of $P$ in $V_j$. Then the probability that $V_j$ is in $S_1$ satisfies
\mults{
\Pr[V_j \in S_1] = \Pr\sbrac{X \le d-2d^{1/2}\log d} = \Pr\sbrac{X -\frac{rm}{n} \le -d^{1/2}\log d } \\
\le \exp\sbrac{- \frac{d \log^2 d}{3(d-d^{1/2}\log d)}}= \exp\sbrac{-\Omega\rbrac{ \log^2 d}},
}
where for our inequality we have used the Chernoff bound (Lemma \ref{Chernoff}). Therefore $\E[|S_0|] \le \exp\sbrac{-\Omega\rbrac{ \log^2 d}} n$. Now we argue that $|S_1|$ is concentrated using McDiarmid's inequality (Lemma \ref{Mcdiarmid}). For our application we let $X=|S_1|$ which is a function (say $f$) of the vector $(Z_1, \ldots Z_{rm})$ where $Z_i$ tells us which bucket the $i^{th}$ point of $P$ went into. Moving a point from one bucket to another can only change $|S_1|$ by at most 1 so we use $c=1$. Thus we get the bound
\begin{equation}\label{Az}
\Pr(|X-\E[X]| \ge n^{2/3} ) \le 2\exp\rbrac{-\frac{ n^{4/3}}{2 rm}} = o(1).
\end{equation}

Now we handle $S_2$. For $1 \le j \le n$ let $Y_j$ be the number of parts $R_k \in \mc{R}$ such that $R_k$ contains a point in the bucket $V_j$ as well as a point in some bucket $V_{j'}$ where $|V_{j'}| > d$. Note that if $V_j \in S_2$ then $Y_j \ge d^{1/2} \log d.$ We view $R_k$ as a set of $r$ points, say $\{q_1, \ldots, q_r\}$ each going into a uniform random bucket. Say $q_i$ goes to bucket $V_{j_i}$. The probability that $R_k$ is counted by $Y_j$ is at most
\begin{align*}
   & r\Pr[j_1=j \mbox{ and } |V_{j_1}|>d ] + r(r-1) \Pr[j_1=j \mbox{ and } |V_{j_2}|>d ] \\
   & = \frac{r}{n} \Pr[|V_{j_1}|>d \big| j_1=j]+ \frac{r(r-1)}{n} \Pr[|V_{j_2}|>d \big| j_1=j]\\
   & \le \frac{r^2}{n} \Pr[|V_{j_1}|>d \big| j_1=j]\\
   & \le \frac{r^2}{n} \Pr[\bin(rm-1, 1/n) \ge d] = \frac{r^2}{n}\exp\sbrac{-\Omega\rbrac{ \log^2 d}}.\\
\end{align*}
Thus we have 
\[
\E[Y_j] = m \cdot \frac{r^2}{n}\exp\sbrac{-\Omega\rbrac{ \log^2 d}} \le rd\exp\sbrac{-\Omega\rbrac{ \log^2 d}} = rd^{1/2}\exp\sbrac{-\Omega\rbrac{ \log^2 d}}
\]
and so Markov's inequality gives us
\[
\Pr\sbrac{Y_j \ge d^{1/2} \log d} \le \frac{rd\exp\sbrac{-\Omega\rbrac{ \log^2 d}}}{d^{1/2} \log d} = \exp\sbrac{-\Omega\rbrac{ \log^2 d}}
\]
and so $\E[|S_2|] = n \exp\sbrac{-\Omega\rbrac{ \log^2 d}}.$ We use McDiarmid's inequality once more, this time with $X=|S_2|$. A change in choice of bucket changes $|S_2|$ by at most one and so \eqref{Az} continues to hold. Thus 
\[
|S_0|=n \exp\sbrac{-\Omega\rbrac{ \log^2 d}}.\quad \text{w.h.p.}
\]
\subsubsection{A property of independent subsets of $\mc{H}_r(n,m)$}\label{322}
Fix $1 \le j \le r-1$. Set 
\[
a:= \brac{1+\frac \e2}\rbrac{\frac{r \log d}{(r-1)d}}^{\frac{1}{r-1}}, \qquad \k_j:= \frac {10d}r \binom rj a^j,\qquad p:=\frac{d(r-1)!}{n^{r-1}}.
\]
The expected number of independent sets $A$ in $\mc{H}_r(n, p)$ of size at most $an$ such that there are $\k_j n$ edges each having $j$ vertices in $A$ is at most 
\begin{align*}
   & \sum_{s=1}^{an}\binom{n}{s}(1-p)^{\binom{s}{r}} \binom{\binom{s}{j}\binom{n}{r-j}}{\k_j n }p^{\k_j n}\\
   & \le\sum_{s=1}^{an} \exp\cbrac{ s \log \rbrac{\frac{en}{s}} - \binom sr p + \k_j n \log \rbrac{\frac{e \frac{(an)^j}{j!} \frac{n^{r-j}}{(r-j)!}p}{\k_j n}}}\\
&=\sum_{s=1}^{an} \exp\cbrac{ s \log \rbrac{\frac{en}{s}} - \binom sr p + \k_j n\log\bfrac{ea^j}{10}}\\
   & \le an \cdot  \exp\cbrac{ \sbrac{ \log \rbrac{\frac ea}  - \frac {10d}r \binom rj a^{j-1} \log \rbrac{\frac{10}{e}} }an }\\
   &=o(1/n)
\end{align*}
where the last line follows since as $d \rightarrow \infty$ we have 
\[
\log \rbrac{\frac ea} \sim \frac{1}{r-1} \log d
\]
and 
\[
 \frac {10d}r \binom rj a^{j-1} \log \rbrac{\frac{10}{e}} = \Omega\rbrac{d^{\frac{r-j}{j-1}} \log^{-\frac{j-1}{r-1}} d} \gg \log d.
\]
Thus with probability $1-o(1/n)$, $\mc{H}_r(n, p)$ has a coloring using $(1+\e/2)\rbrac{\frac{(r-1)d}{r \log d}}^{\frac{1}{r-1}}$ colors such that for each color class $A$ and for each $1 \le j \le r-1$ there are at most $\k_j n$ edges with $j$ vertices in $A$. The hypergraph $\mc{H}_r(n, m)$, $m=\binom{n}rp$ will have this property w.h.p..
\subsubsection{Transforming $\mc{H}_{\mc{R}'}$ into $\mc{H}_r(n,d)$}
Now we will complete the transformation to the random regular hypergraph $\mc{H}_r(n, d)$. We are open to the possibility that doing so will render our coloring no longer proper, since this process will involve changing some edges which might then be contained in a color class. We will keep track of how many such ``bad" edges there are and then repair our coloring at the end. 

We have to add at most $(3d^{1/2} \log d+ d \exp\sbrac{-\Omega\rbrac{ \log^2 d}})n < (4d^{1/2} \log d)n$ points, which takes at most as many steps. For each color class $A$ of $\mc{H}_{\mc{R}'}$ define $X_{A, j}=X_{A, j}(i)$ to be the number of edges with $j$ vertices in $A$ at step $i$. We have already established that $X_{A, j}( 0) \le \k_j n$. This follows from Section \ref{322} and the fact that we have removed edges from $\mc{H}(n,m)$ to obtain $\mc{H}_{\mc{R}'}$. Let $\mc{E}_i$ be the event that at step $i$ we have that for each color class $A$ and for each $1 \le j \le r-1$ we have $X_{A, j}( i) \le 2\k_j n$. Then, assuming $\mc{E}_i$ holds, the probability that $X_{A, j}$ increases at step $i$  is at most 
\[
    \sum_{\substack{1\leq k \leq r,\;\; j_\ell \ge 1 \\ j_1 + \cdots + j_k = j}} \prod_{1\le \ell \le k} \frac{2\k_{j_\ell} n}{nd/r}     = \sum_{\substack{1\leq k \leq r,\;\; j_\ell \ge 1 \\ j_1 + \cdots + j_k = j}} \prod_{1\le \ell \le k} 20 \binom r{j_k} a^{j_k}      \le \sum_{\substack{1\leq k \leq r,\;\; j_\ell \ge 1 \\ j_1 + \cdots + j_k = j}} 20^r 2^{r^2} a^j      \le  40^r 2^{r^2} a^j.
\]
Also, the largest possible increase in $X_{A, j}$ in one step is $r$. Thus, the final value of $X_{A, j}$ after at most $(4d^{1/2} \log d)n$ steps is stochastically dominated by $\k_j n + r Y$ where $Y \sim  \bin\big((4d^{1/2} \log d)n, 40^r 2^{r^2} a^j \big)$. An easy application of the Chernoff bound tells us 
\begin{equation}\label{eqn:probbound}
    \Pr\rbrac{Y > 2\E[Y]} \le \exp(-\Omega(n)).
\end{equation}
Note that here 
\[
\frac{2\E[Y]}{\k_j n} = \frac{8d^{1/2}   \log d \cdot 40^r 2^{r^2} a^j n }{10d \binom rj a^j n/r} = O(d^{-1/2} \log d) < 1
\]
for sufficiently large $d$. Thus, using \eqref{eqn:probbound} and the union bound over all color classes $A$, we have w.h.p.~the final value of $X_{A, j}$ is at most $\k_j n + 2\E[Y] \le 2 \k_j n$ for all $1 \le j \le r-1$.

Now we address ``bad'' edges, i.e. edges contained in a color class. Assuming $\mc{E}_i$ holds, the expected number of new edges contained in any color class at step $i$ is at most $ r (40)^r 2^{r^2+2r} a^r = O\rbrac{\rbrac{\frac{\log d}{d}}^\frac{r}{r-1}}$ (because it would have to be one of the colors of one of the vertices we are adding points to). Thus the expected number of bad edges created in $(4d^{1/2} \log d)n$ steps is stochastically dominated by  $Z \sim r\cdot \bin\big((4d^{1/2} \log d)n, O\rbrac{\rbrac{\frac{\log d}{d}}^\frac{r}{r-1}} \big)$. Another easy application of Chernoff shows that w.h.p.~$Z \le 2\E[Z] =O(d^{-1/2} n)$.

We repair the coloring as follows. First we uncolor one vertex from each bad edge, and let the set of uncolored vertices be $U$ where $|U|=u = O\rbrac{d^{-1/2} n}$. Let 
\[
\d := \frac \e2 \rbrac{\frac{(r-1)d}{r \log d}}^{\frac{1}{r-1}}.
\]
We claim that for every $S \subseteq U,|S|=s$, the hypergraph induced on $S$ has at most $\d s / r$ edges. This will complete our proof since it implies that the minimum degree is at most $\d$ and so $U$ can be recolored using a fresh set of $\d$ colors, yielding a coloring of $\mc{H}_r(n, d)$ using at most 
\[
\chi(\mc{H}_r(n, M)) + \d \le \rbrac{1+\frac \e2}\rbrac{\frac{(r-1)d}{r \log d}}^{\frac{1}{r-1}} + \frac  \e 2\rbrac{\frac{(r-1)d}{r \log d}}^{\frac{1}{r-1}} = \rbrac{1+ \e}\rbrac{\frac{(r-1)d}{r \log d}}^{\frac{1}{r-1}}
\]
colors.
 The expected number of sets $S$ with more than $\d s / r$ edges is at most 
\begin{align}
   & \sum_{1 \le s \le u} \binom{n}{s} \binom{\binom{ds}{r}}{\d s / r} \frac{1}{\binom{dn}{r} \binom{dn-r}{r} \ldots \binom{dn-\d s+r}{r}}\nn\\
   & \le \sum_{1 \le s \le u} \rbrac{\frac{ne}{s}}^s \rbrac{\frac{ (dse/r)^r e}{\d s / r}}^{\d s / r} \frac{(r!)^{\d s / r}}{(dn-\d s)^{\d s}}\nn\\
    & \le \sum_{1 \le s \le u} \sbrac{\frac{ne}{s} \rbrac{\frac{dse}{(dn-\d s)r}}^\d \rbrac{\frac{er \cdot r!}{\d s}}^{\d/r}}^s.\label{eqn:S1}
\end{align}
Now for $1 \le s \le \sqrt{n}$ the term in \eqref{eqn:S1} is at most 
\[
\sbrac{O(n) \cdot \rbrac{O(n^{-1/2})}^\d \cdot O(1)}^s = o(1/n)
\]
since $\d$ can be made arbitrarily large by choosing $d$ large. Meanwhile for $\sqrt{n} \le s \le u$ we have that the term in \eqref{eqn:S1} is at most 
\[
\sbrac{O(n^{1/2})  \cdot O(1) \cdot \rbrac{O(n^{-1/2})}^{\d/r}}^s = o(1/n).
\]
Now since \eqref{eqn:S1} has $O(n)$ terms the whole sum is $o(1)$ and we are done. This completes the proof of Theorem \ref{thm:main}.
\section{Summary}
We have asymptotically computed the chromatic number of random $r$-uniform, $d$-regular hypergraphs when proper colorings mean that no edge is mono-chromatic. It would seem likely that the approach we took would extend to other definitions of proper coloring. We have not attempted to use second moment calculations to further narrow our estimates. These would seem to be two natural lines of further research.

\end{document}